\documentclass{amsart}
\usepackage{amsmath, amssymb, amsthm}
\usepackage{mathtools}
\usepackage{thmtools}
\usepackage{thm-restate}
\usepackage{graphicx}
\usepackage{pinlabel}
\graphicspath{{./Figures/}}
\usepackage{float}
\usepackage[rightcaption]{sidecap}
\usepackage{wrapfig}
\usepackage{caption}
\usepackage{subcaption}
\usepackage[all]{xy}
\usepackage[english]{babel}
\usepackage[dvipsnames]{xcolor}
\usepackage[margin=1in]{geometry}
\usepackage{hyperref}
\hypersetup{colorlinks=true, linkcolor=NavyBlue, citecolor=NavyBlue}
\addto\extrasenglish{}
\numberwithin{equation}{section}
\usepackage[hyperpageref]{backref}
\newtheorem{theorem}{Theorem}[section]
\theoremstyle{definition}
\newtheorem{definition}[theorem]{Definition}

\newtheorem{example}[theorem]{Example}

\newtheorem{remark}[theorem]{Remark}
\newtheorem{question}[theorem]{Question}

\newtheorem{lemma}[theorem]{Lemma}

\begin{document}

\title{An algorithm for Seifert surfaces in $3$-manifolds via surgery presentations}
\author{Geunyoung Kim}
\address{Department of Mathematics \& Statistics, McMaster University, Hamilton, Ontario, Canada}
\email{kimg68@mcmaster.ca}

\begin{abstract}
The classical Seifert algorithm provides an explicit construction of a Seifert surface for any link in $S^3$. Alegria and Menasco extended this construction to integral homology $3$-spheres using Heegaard splittings. In this paper, we extend the Seifert algorithm to null-homologous links in arbitrary $3$-manifolds via surgery on framed links in $S^3$.
\end{abstract}

\maketitle

\section{Introduction}
A \textit{framed link} in $S^3$ is a pair $(L,\phi)$, where $\phi:\bigsqcup_{i=1}^n (S^1\times B^2)_i\hookrightarrow S^3$ is an embedding of a disjoint union of solid tori, and $L=\phi\left(\bigsqcup_{i=1}^n(S^1\times\{(0,0)\})_i\right)$ is a link in $S^3$. The embedding $\phi$ is called a \textit{framing} of $L$. We write $L_i=\phi_i(S^1\times\{(0,0)\})$, where $\phi_i=\phi|_{(S^1\times B^2)_i}$ is the restriction to the $i$-th solid torus, and refer to $(L_i,\phi_i)$ as a \textit{framed knot}.

The result of \textit{surgery} on $S^3$ along the framed link $(L,\phi)$ is the closed, orientable, connected $3$-manifold
\[
S^3(L,\phi)=\left(S^3\setminus\operatorname{int}(\nu(L))\right) \cup_{\phi|_{\bigsqcup_{i=1}^n (S^1\times S^1)_i}}
\left(\bigsqcup_{i=1}^n(B^2\times S^1)_i\right),
\]
where $\nu(L)=\phi(\bigsqcup_{i=1}^n(S^1\times B^2)_i)$ is a tubular neighborhood of $L$ in $S^3$. Equivalently, it is obtained by removing the interior of $\nu(L)$ and gluing in solid tori along the boundary via the restriction of $\phi$. Throughout this paper, we regard $S^3$ as equipped with its standard orientation, and the orientation of $S^3(L,\phi)$ is induced from $S^3$. The manifold $S^3(L,\phi)$ is the boundary of a $4$-manifold obtained by gluing $2$-handles to the $4$-ball $B^4$ along $(L,\phi)$. 

Every closed, orientable, connected $3$-manifold can be obtained by surgery along a framed link in $S^3$ \cite{lickorish1962representation,wallace1960modifications}. This allows us to work entirely in $S^3$ and reduces questions about $3$-manifolds to framed link data.

Let $(L,\phi)$ be a framed link in $S^3$, and let $K\subset S^3\setminus\operatorname{int}(\nu(L))$ be a link. By definition, $S^3\setminus\operatorname{int}(\nu(L))\subset S^3(L,\phi)$, so $K$ may be regarded as a link in the surgered manifold $S^3(L,\phi)$. Conversely, any link in $S^3(L,\phi)$ can be isotoped into the complement $S^3\setminus\operatorname{int}(\nu(L))$ since it can be arranged to be in general position with respect to the dual framed link.

Recall that a \textit{Seifert surface} for an oriented link $K$ in a $3$-manifold $Y$ is a compact, oriented surface $F\subset Y$ without closed components such that $\partial F=K$. The link $K$ admits a Seifert surface if and only if it is null-homologous in $Y$. While this condition characterizes the existence of a Seifert surface, it does not in general provide a concrete method for constructing one.

For a link in $S^3$, Seifert surfaces can be constructed explicitly from a link diagram \cite{seifert1935geschlecht}. The extension to homology $3$-spheres is due to Alegria and Menasco via Heegaard splittings \cite{alegria2024seifert}.

\begin{question}
Is there an explicit algorithm for constructing a Seifert surface for a null-homologous link in a $3$-manifold?
\end{question}

Our main result gives an affirmative answer to this question in the setting of framed link presentations.

\begin{restatable*}{theorem}{Algorithm}\label{thm: Algorithm}
Let $Y=S^3(L,\phi)$ be a $3$-manifold obtained by surgery on a framed link $(L,\phi)$ in $S^3$. Assume that $K\subset S^3\setminus\operatorname{int}(\nu(L))$ is an oriented null-homologous link in $Y$. Then there exists an explicit algorithm which isotopes $K$ in $Y$ to a link $K'$ that bounds a Seifert surface in $S^3\setminus\operatorname{int}(\nu(L))$.
\end{restatable*}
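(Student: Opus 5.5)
The plan is to reduce everything to linking numbers in $S^3$ and to correct $K$ by band sums with pushoffs of the surgery curves, which are isotopies in $Y$. Write $L=L_1\cup\dots\cup L_n$, and orient each $L_i$. Let $\lambda_i=\phi_i(S^1\times\{1\})$ be the framing curve and $\mu_i$ a meridian of $L_i$, both on $\partial\nu(L_i)$. Let $\Lambda=(\operatorname{lk}(L_i,L_j))$ be the linking matrix, with the framing integers $\operatorname{lk}(L_i,\lambda_i)$ on the diagonal. Set $E=S^3\setminus\operatorname{int}(\nu(L))$. Alexander duality gives $H_1(E)\cong\mathbb{Z}^n$, generated by the $\mu_i$, and $[K]=\sum_i \operatorname{lk}(K,L_i)\,\mu_i$. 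Gluing in the solid tori kills exactly the classes of the $\lambda_i$, and $[\lambda_i]=\sum_j \Lambda_{ij}\mu_j$. So $H_1(Y)=\mathbb{Z}^n/\Lambda\mathbb{Z}^n$, and $K$ is null-homologous in $Y$ if and only if the vector $v=(\operatorname{lk}(K,L_i))_i$ lies in the image of $\Lambda$. Moreover $K$ bounds a Seifert surface in $E$ if and only if $v=0$: by Alexander duality, $K$ then bounds in $S^3$, and any Seifert surface can be made disjoint from $L$ by tubing along $L$, which works precisely when the algebraic intersections with each $L_i$ vanish.

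The algorithm is then as follows.
\begin{enumerate}
\item Compute $v$ from a diagram of $K\cup L$ by counting signed crossings.
\item Solve $\Lambda a=v$ for $a\in\mathbb{Z}^n$. An integer solution exists by hypothesis, and one can be found explicitly, for instance via Smith normal form.
\item For each $i$, take $|a_i|$ parallel copies of $\lambda_i$, oriented by the sign of $a_i$ (negated so that the correction subtracts $v$), and band-sum them to $K$ along arcs in $E$.
\end{enumerate}

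Each such band sum is an isotopy in $Y$. The curve $\lambda_i$ bounds a meridian disk $D_i$ of the glued-in solid torus $(B^2\times S^1)_i$. Sliding an arc of $K$ across $D_i$ (a handle slide of $K$ over the surgery curve) replaces $K$ by $K\#_b\lambda_i$, and this is an isotopy in $Y$ through the disk. Parallel copies are handled by using parallel meridian disks. The new link $K'$ lies in $E$ and satisfies $\operatorname{lk}(K',L_j)=\operatorname{lk}(K,L_j)-\sum_i a_i\Lambda_{ij}=0$ for every $j$, using the symmetry of $\Lambda$ and the fact that $\operatorname{lk}(\lambda_i,L_j)=\Lambda_{ij}$.

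Finally, construct the surface explicitly. Apply Seifert's algorithm to a diagram of $K'$ to get a surface $F\subset S^3$. Isotope $F$ so that it meets each $L_j$ transversely in points, and these intersection points have algebraic sum zero. Pair up points of opposite sign that are adjacent along $L_j$. For each pair, remove small disks around the two points and attach the annulus $\partial\nu$ of the arc of $L_j$ joining them. This is orientation-consistent precisely because the signs are opposite. The result is an oriented surface in $E$ with boundary $K'$. Discard any closed components; none arise if the construction starts from Seifert's algorithm and only tubes are added.

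I expect the main obstacle to be bookkeeping rather than topology: orientations and signs in the handle-slide step, and ensuring that the tubing pairs are chosen consecutively along $L_j$. If intersections of opposite sign are not adjacent, I will first tube an innermost adjacent opposite-sign pair and induct; such a pair always exists when the signed count is zero and the count is nonzero.
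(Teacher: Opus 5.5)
Your proposal is correct and follows the paper's proof essentially step for step. You solve $M_{(L,\phi)}X_K=V_K$, slide $K$ over the $L_i$ (band sums with $\mp\widetilde L_i$ according to the sign of $x_i$) so the linking vector vanishes, then take a Seifert surface for $K'$ in $S^3$ and tube along arcs of $L$ joining oppositely signed intersection points. Two of your details are things the paper leaves implicit: tubing only pairs that are consecutive along $L_j$, and finding an integer solution via Smith normal form.
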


We recall that the \textit{linking number} $\operatorname{lk}_Y(K_1,K_2)$ of oriented null-homologous knots $K_1,K_2$ in an oriented $3$-manifold $Y$ is defined as the algebraic intersection number of a Seifert surface for $K_1$ with $K_2$. Since both knots are null-homologous, this number is independent of the choice of Seifert surface and is symmetric in $K_1$ and $K_2$. When $Y=S^3$, the linking number can be easily read off from a diagram without constructing a Seifert surface; it is equal to one half the sum of the signs of all crossings between the two components.

This allows us to compute the linking number diagrammatically in a $3$-manifold presented by surgery on a framed link in $S^3$.

\begin{restatable*}{corollary}{linkingformula}\label{cor: linking formula}
Let $Y=S^3(L,\phi)$ be a $3$-manifold obtained by surgery on a framed link $(L,\phi)$ in $S^3$. Assume that $K_1,K_2\subset S^3\setminus\operatorname{int}(\nu(L))$ are oriented null-homologous knots in $Y$. Then the linking number between $K_1$ and $K_2$ in $Y$ satisfies
\[
\operatorname{lk}_Y(K_1,K_2)=\operatorname{lk}_{S^3}(K_1,K_2)-X_{K_1}^{\intercal}V_{K_2},
\]
where $M_{(L,\phi)}$ denotes the linking matrix of $(L,\phi)$, $V_{K_i}$ is the linking vector of $K_i$ with $L$, and $X_{K_i}$ is any solution of the linear system $M_{(L,\phi)} X_{K_i}=V_{K_i}$. The right-hand side is independent of the choice of the solution $X_{K_1}$. In particular, if $Y$ is a homology $3$-sphere, then 
\[
\operatorname{lk}_Y(K_1,K_2)=\operatorname{lk}_{S^3}(K_1,K_2)-V_{K_1}^{\intercal}M_{(L,\phi)}^{-1} V_{K_2}.
\]
\end{restatable*}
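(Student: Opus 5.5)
The plan is to build an explicit Seifert surface for $K_1$ in $Y$ from a Seifert surface in $S^3$ corrected by copies of the surgery disks, and then count its intersections with $K_2$. Write $L=L_1\cup\dots\cup L_n$ with framings $\phi_i$. For each $i$, let $\lambda_i=\phi_i(S^1\times\{1\})$ be the framing curve on $\partial\nu(L_i)$ and $\mu_i$ the meridian. In $Y$, the curve $\lambda_i$ bounds the meridian disk $D_i=B^2\times\{pt\}$ of the glued-in solid torus. Let $F_1\subset S^3$ be a Seifert surface for $K_1$ in general position with respect to $L$. Then $F_1$ meets $L_j$ algebraically $(V_{K_1})_j$ times. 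After removing small disks around these intersection points, we get a surface $F_1^\circ\subset S^3\setminus\operatorname{int}(\nu(L))$ with $\partial F_1^\circ=K_1\cup\bigcup_j (\text{meridians of }L_j)$, whose homology class on $\partial\nu(L_j)$ is $-(V_{K_1})_j[\mu_j]$.

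The next step is to cancel these meridian boundaries using surgery data. Choose $X=X_{K_1}\in\mathbb{Z}^n$ with $M X=V_{K_1}$ (an integral solution exists because $K_1$ is null-homologous; this is the standard description of $H_1(Y)=\operatorname{coker}M$). Let $\Sigma_i$ be a Seifert surface for $L_i$ in $S^3$, and let $\Sigma_i^\circ$ be its restriction to the exterior, pushed so that its boundary is a parallel $\lambda_i$ copy. Then $\partial\Sigma_i^\circ$ consists of $\lambda_i$ together with meridians of the $L_j$ with total class $-\operatorname{lk}(L_i,L_j)[\mu_j]$ for $j\neq i$. Since $\lambda_i=\ell_i+M_{ii}\mu_i$, the boundary on $\partial\nu(L_i)$ is $\ell_i+M_{ii}\mu_i$ together with the boundary of $\Sigma_i^\circ$ on the other tori. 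Consider the 2-chain
\[C=F_1^\circ-\sum_i X_i\bigl(\Sigma_i^\circ\cup D_i\bigr).\]
Its boundary on $\partial\nu(L_j)$ is $-(V_{K_1})_j\mu_j+\sum_i X_i M_{ij}\mu_j=0$ in homology, so we cap it off by an annulus/tube on each torus (or simply work at the chain level, where null-homologous boundary pieces on the tori bound 2-chains lying in the tori). This yields a 2-chain in $Y$ with boundary $K_1$.

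Compute $C\cdot K_2$. Since $K_2$ lies in the exterior, it misses the $D_i$ and the tori, so
\[C\cdot K_2=F_1\cdot K_2-\sum_i X_i\,\Sigma_i\cdot K_2=\operatorname{lk}_{S^3}(K_1,K_2)-X^{\intercal}V_{K_2}.\]
This requires $K_2$ to be pushed off the tori, which is fine after shrinking $\nu(L)$. Independence of the choice of $X$ follows abstractly from the well-definedness of $\operatorname{lk}_Y$. It can also be checked directly: if $MX'=0$, then $X'^{\intercal}V_{K_2}=X'^{\intercal}MX_{K_2}=(MX')^{\intercal}X_{K_2}=0$, using symmetry of $M$ and the existence of $X_{K_2}$ (since $K_2$ is null-homologous). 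The homology sphere case follows by setting $X=M^{-1}V_{K_1}$ and using $M^{-\intercal}=M^{-1}$.

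The main obstacle is the bookkeeping of signs and orientations in the boundary computation on each torus. This includes the orientation of $D_i$ relative to $\lambda_i$, the induced orientations of the meridians, and the orientation convention for $Y$. My first choice is to verify everything on the example $Y=S^3_{p}(U)$, a lens space, with $K_1,K_2$ meridians. There the formula should give $\operatorname{lk}=-1/p\cdot\ldots$, or in the integral case $\pm1$ surgery, the known value $\mp1$, and this fixes the signs.
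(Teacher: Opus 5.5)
Your argument is correct, but it takes a different route from the paper. The paper never writes down a chain. Instead it runs its algorithm on $K_1$: it slides $K_1$ over the framing curves $\widetilde L_i$ according to $X_{K_1}$, which is an isotopy in $Y$ whose bands can be kept off $K_2$. Each slide changes $\operatorname{lk}_{S^3}(K_1,K_2)$ by $\pm\operatorname{lk}_{S^3}(L_i,K_2)$. The tubing lemma then gives an embedded Seifert surface for the resulting $K_1'$ inside $S^3\setminus\operatorname{int}(\nu(L))$, so $\operatorname{lk}_Y(K_1',K_2)=\operatorname{lk}_{S^3}(K_1',K_2)$.

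Your chain $F_1^\circ-\sum_i X_i(\Sigma_i^\circ\cup D_i)$ is essentially the homological shadow of that construction: a slide sweeps $K_1$ across $D_i$, and band-summing with $\widetilde L_i$ adds $\Sigma_i$ to a bounding chain in $S^3$. Your version needs no isotopies, bands or tubes. The signs are forced by requiring the torus boundary terms to cancel, which is exactly $MX=V_{K_1}$ together with the symmetry of $M$. So the $S^3_{\pm1}(U)$ check is only a sanity check, and it does give $\mp1$.

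In exchange, you should state explicitly that any $2$-chain bounded by $K_1$ computes $\operatorname{lk}_Y(K_1,K_2)$. Its difference with a Seifert surface is a $2$-cycle, which meets the null-homologous $K_2$ algebraically zero times. This matters because the paper defines $\operatorname{lk}_Y$ via Seifert surfaces.

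Your route never uses integrality of $X$. It therefore proves the rational version for rationally null-homologous knots verbatim, e.g.\ $-1/p$ for two meridians in $S^3_p(U)$. The slide argument cannot do this, since slides need integer multiplicities. The paper's route, in turn, produces an actual embedded Seifert surface in the exterior, which it needs anyway for the Seifert matrices in the examples.

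One description needs fixing. $\Sigma_i^\circ$ cannot be pushed to have boundary the framing curve $\lambda_i$ on $\partial\nu(L_i)$ unless $M_{ii}=0$. In $H_1(S^3\setminus\operatorname{int}(\nu(L)))$ one has $\lambda_i=\sum_j M_{ij}\mu_j$, while such a surface would make $\lambda_i$ homologous to $\sum_{j\neq i}M_{ij}\mu_j$. The boundary of $\Sigma_i^\circ$ on $\partial\nu(L_i)$ is the Seifert longitude $\ell_i$. After attaching $D_i$ with $\partial D_i=-\lambda_i$, the leftover piece $\ell_i-\lambda_i\sim -M_{ii}\mu_i$ supplies the diagonal term $X_iM_{ii}\mu_i$ in your cancellation. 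As you describe it, that term would have no source. Your final boundary formula is the correct one, so only the description changes.
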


As an application of \autoref{thm: Algorithm} and \autoref{cor: linking formula}, we can construct Seifert surfaces and compute the associated Seifert matrices, signatures, and Alexander polynomials for knots in homology $3$-spheres; see \autoref{ex: example 1} and \autoref{ex: example 2}. 

Let $(K,\phi)$ be a null-homologous framed knot in an oriented $3$-manifold $Y$, where $\phi:S^1\times B^2\hookrightarrow Y$ satisfies $\phi(S^1\times\{(0,0)\})=K$. The \textit{framing coefficient} (or \textit{surgery coefficient}) of $K$ in $Y$ is defined by $\operatorname{fr}_Y(K)=\operatorname{lk}_Y(K,\widetilde K)$, where $\widetilde K=\phi(S^1\times\{\operatorname{pt}\})$ for some $\mathrm{pt}\in S^1$ is a \textit{push-off} of $K$ determined by the framing $\phi$, and $K$ and $\widetilde K$ are oriented consistently. We denote the result of surgery $Y(K,\phi)$ by $Y(K^p)$, where $p=\operatorname{fr}_Y(K)$, and refer to this operation as \textit{$p$-surgery} on $K$ in $Y$.

Framings of a knot are classified by $\pi_1(SO(2))\cong\mathbb{Z}$. When $K$ is null-homologous in $Y$, a choice of Seifert surface determines a preferred reference framing, and the integer $p=\operatorname{lk}_Y(K,\widetilde K)$ measures the framing relative to this reference.

When $Y=S^3$, a framed link $(L,\phi)$ is represented by a link diagram $L=L_1^{p_1}\cup\cdots\cup L_n^{p_n}\subset S^3$ together with framing coefficients $\operatorname{fr}_{S^3}(L_i)=p_i$ assigned to each component $L_i$. Such a diagram is called a \textit{surgery diagram}, and the resulting surgered manifold is denoted by $S^3(L_1^{p_1}\cup\cdots\cup L_n^{p_n})$. Two surgery diagrams represent diffeomorphic $3$-manifolds if and only if they are related by a sequence of handle slides and blow-ups or blow-downs \cite{kirby1978calculus}.

We ask how surgery on a knot in a surgered manifold can be described in terms of the original surgery diagram.

\begin{question}
Let $K\subset S^3\setminus\operatorname{int}(\nu(L))$ be a null-homologous knot in $Y=S^3(L,\phi)$. Describe a surgery diagram for the $p$-surgery $Y(K^p)$.
\end{question}

The following corollary provides an explicit answer.

\begin{restatable*}{corollary}{framingcoefficient}\label{cor: Kirby}
\label{cor:surgery-coefficient-conversion}
Let $K\subset S^3\setminus\operatorname{int}(\nu(L))$ be a null-homologous knot in $Y=S^3(L_1^{p_1}\cup\dots\cup L_n^{p_n})$. Then a surgery diagram for the $p$-surgery $Y(K^p)$ is obtained by assigning to $K$ a framing coefficient $p+X_K^{\intercal}V_K$ to $K$ in the surgery diagram of $Y$. In other words, 
\[Y(K^p)\cong S^3(L_1^{p_1}\cup \cdots\cup L_n^{p_n}\cup K^{p+X_K^{\intercal}V_K}).\]
\end{restatable*}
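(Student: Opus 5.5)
The plan is to compare the two framings of $K$ directly: the one measured in $Y$ and the one measured in $S^3$. The framed knot $(K,\phi_K)$ with $\phi_K:S^1\times B^2\hookrightarrow S^3\setminus\operatorname{int}(\nu(L))\subset Y$ is one and the same embedded solid torus, whether we view it in $S^3$ or in $Y$. Hence $Y(K,\phi_K)=S^3(L\cup K,\phi\cup\phi_K)$ tautologically, because surgery along disjoint framed knots can be performed in either order. The only thing to verify is the bookkeeping of coefficients. We must show that if $\operatorname{fr}_Y(K)=\operatorname{lk}_Y(K,\widetilde K)=p$, then $\operatorname{fr}_{S^3}(K)=\operatorname{lk}_{S^3}(K,\widetilde K)=p+X_K^{\intercal}V_K$, where $\widetilde K$ is the push-off determined by $\phi_K$.

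To do this I will apply \autoref{cor: linking formula} with $K_1=K$ and $K_2=\widetilde K$. Both lie in $S^3\setminus\operatorname{int}(\nu(L))$, and both are null-homologous in $Y$, since $\widetilde K$ is isotopic to $K$ inside the solid torus $\phi_K(S^1\times B^2)$, which misses $\nu(L)$. The corollary then gives
\[
\operatorname{lk}_Y(K,\widetilde K)=\operatorname{lk}_{S^3}(K,\widetilde K)-X_K^{\intercal}V_{\widetilde K}.
\]
The same isotopy in the complement of $L$ shows that $\operatorname{lk}_{S^3}(\widetilde K,L_i)=\operatorname{lk}_{S^3}(K,L_i)$ for every $i$, so $V_{\widetilde K}=V_K$. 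Therefore $p=\operatorname{fr}_{S^3}(K)-X_K^{\intercal}V_K$, which rearranges to $\operatorname{fr}_{S^3}(K)=p+X_K^{\intercal}V_K$. This integer is independent of the chosen solution $X_K$ by the independence statement in \autoref{cor: linking formula}.

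The main point needing care is that the framing coefficient in $S^3$ determines the framing $\phi_K$ up to isotopy in $S^3\setminus\nu(L)$. Two framings of $K$ differ by an element of $\pi_1(SO(2))\cong\mathbb{Z}$, realized by twisting inside $\nu(K)$. Such twisting changes $\operatorname{lk}_{S^3}(K,\widetilde K)$ and $\operatorname{lk}_Y(K,\widetilde K)$ by the same integer, because it does not change $V$. So the correspondence $p\mapsto p+X_K^{\intercal}V_K$ is a bijection between $Y$-framings and $S^3$-framings. In particular, the framing with $Y$-coefficient $p$ is exactly the one drawn with coefficient $p+X_K^{\intercal}V_K$ in the diagram, which gives $Y(K^p)\cong S^3(L_1^{p_1}\cup\cdots\cup L_n^{p_n}\cup K^{p+X_K^{\intercal}V_K})$. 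The orientation conventions also require attention: $K$ and $\widetilde K$ must be oriented consistently, and the sign in \autoref{cor: linking formula} must be used as stated.
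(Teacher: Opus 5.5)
Your proposal is correct and follows the paper's own proof. Both apply the linking number formula to the pair $(K,\widetilde K)$, use $V_{\widetilde K}=V_K$, and rearrange to get $\operatorname{lk}_{S^3}(K,\widetilde K)=p+X_K^{\intercal}V_K$. Your extra remarks only make explicit what the paper leaves implicit: the same solid torus realizes both surgeries, $\widetilde K$ is null-homologous in $Y$, and the shift $X_K^{\intercal}V_K$ depends neither on the choice of $X_K$ nor on the framing.
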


In \autoref{ex: example 1} and \autoref{ex: example 2}, we apply \autoref{cor: Kirby} to show that certain knots in homology $3$-spheres are not local knots, that is, they are not isotopic to knots contained in a $3$-ball. More precisely, if $K\subset Y$ were isotopic to a knot $J$ in a $3$-ball in $Y$, then for every integer $p$ we would have $Y(K^p)\cong Y\#S^3(J^p)$. This yields a contradiction: for some $p$, elementary Kirby calculus shows that the two manifolds are not diffeomorphic.

\subsection*{Acknowledgments} The author would like to thank Hans Boden, Ducan McCoy, and William Menasco for helpful conversations.

\section{Preliminaries}\label{sec: some algebra}
In this section we introduce the algebraic tools needed for the Seifert surface algorithm, including linking matrices, homology computations, and slides of links over framed links.

Let $(L,\phi)$ be an oriented framed link in $S^3$, where $L=L_1\cup\cdots\cup L_n$. For each $i$, let $\phi_i:S^1\times B^2\hookrightarrow S^3$ denote the framing of $L_i$. A \textit{meridian} of $L_i$ is the curve $\mu_i=\phi_i(\{x\}\times \partial B^2)\subset \partial \nu(L_i)$, for some $x\in S^1$, oriented so that $\operatorname{lk}(L_i,\mu_i)=1$. Here, $\operatorname{lk}$ denotes a linking number in $S^3$. A \textit{push-off} of $L_i$ is the curve $\widetilde L_i=\phi_i(S^1\times\{y\})$, for some $y\in\partial B^2$, oriented so that it agrees with the orientation of $L_i$. We write $-\widetilde L_i$ for the same push-off with the opposite orientation.

\begin{definition}
The \textit{linking matrix} of the oriented framed link $(L,\phi)$ is the $n\times n$ integer matrix
\[
M_{(L,\phi)}=(m_{ij}),\qquad m_{ij}=\operatorname{lk}(L_i,\widetilde L_j).
\]
\end{definition}
In particular, the matrix $M_{(L,\phi)}$ is symmetric. For $i\neq j$, we have $\operatorname{lk}(L_i,\widetilde L_j)=\operatorname{lk}(L_i,L_j)$. The diagonal entry $m_{ii}$ agrees with the framing coefficient of $L_i$.

The meridians $\mu_1,\dots,\mu_n$ generate the first homology group
\[
H_1\!\left(S^3 \setminus \operatorname{int}(\nu(L))\right) \cong \mathbb{Z}^n.
\]
When surgery is performed along $(L,\phi)$, the attaching curve for the solid torus corresponding to $L_i$ represents the homology class
\[
[\widetilde L_i]=p_i[\mu_i]+\sum_{j\neq i}\operatorname{lk}(L_i,L_j)[\mu_j].
\]
Since each attaching curve bounds a meridional disk in the surgered manifold, these classes become null-homologous in $S^3(L,\phi)$. Consequently, the first homology group of the surgered manifold is given by
\[
H_1(S^3(L,\phi))\cong\mathbb{Z}^n/M_{(L,\phi)}\mathbb{Z}^n .
\]
In particular, 
\[
S^3(L,\phi)\ \text{is an integral homology $3$-sphere} \quad\Longleftrightarrow\quad \det(M_{(L,\phi)})=\pm 1.
\]

Let $K=K_1\cup\cdots\cup K_m\subset S^3\setminus\operatorname{int}(\nu(L))\subset S^3(L,\phi)$ be an oriented link, where the orientation of $S^3(L,\phi)$ is induced by the standard orientation of $S^3$.

\begin{definition}
The \textit{linking vector} of $K$ with respect to $(L,\phi)$ is the column vector
\[
V_K=(v_1,\dots,v_n)^\intercal\in\mathbb{Z}^n, \qquad v_i=\operatorname{lk}(K,L_i)=\sum_{t=1}^m\operatorname{lk}(K_t,L_i).
\]
\end{definition}

Under the identification $H_1\!\left(S^3\setminus\operatorname{int}(\nu(L))\right)\cong \mathbb{Z}^n$ sending the meridian class $[\mu_i]$ to the $i$-th standard basis vector, the homology class $[K] = [K_1]+\cdots+[K_m]$ corresponds to the vector $V_K$. We then obtain the following.
\[
[K]=0\ \text{in}\ H_1(S^3(L,\phi))\quad\Longleftrightarrow\quad
M_{(L,\phi)} X_K = V_K\qquad \text{for some } X_K \in\mathbb{Z}^n.
\]
If $S^3(L,\phi)$ is a homology $3$-sphere, that is $\operatorname{det}(M_{(L,\phi)})=\pm1$, then the solution $X_K=M_{(L,\phi)}^{-1} V_K$ is unique.

\begin{definition}
A vector $X_K\in\mathbb Z^n$ satisfying $M_{(L,\phi)}X_K=V_K$ is called a \textit{solution vector} of $K$.
\end{definition}

A solution vector $X_K$ expresses the homology class of $K$ as a linear combination of the attaching curves $\widetilde L_1,\dots,\widetilde L_n$.

We now define certain isotopies of $K$ in $S^3(L,\phi)$.

\begin{definition}
Let $K_t$ be a component of $K$ and $L_i$ a component of $L$. A \textit{positive} (resp. \textit{negative}) \textit{slide} of $K_t$ over $L_i$ is the band connected sum
\[
K_t\#_b\widetilde{L}_i ,\quad(\text{resp.}\quad K_t\#_b(-\widetilde{L}_i))
\]
where the band $b$ is chosen so that the interior of the band is disjoint from $\nu(L)$ and the orientation on the band sum agrees with the given orientations of $K_t$ and $\widetilde{L}_i$ (resp. $-\widetilde{L}_i$ ). A \textit{positive} (resp.\ \textit{negative}) \textit{slide} of the link $K$ over $L_i$ means performing a positive (resp.\ negative) slide of a component of $K$ over $L_i$.
\end{definition}

\begin{lemma}\label{lem: slide of link}
Let $K\subset S^3\setminus \operatorname{int}(\nu(L))$ be an oriented link, and let $X_K\in \mathbb{Z}^n$ be a solution vector satisfying $M_{(L,\phi)}X_K=V_K$. Then there exists a link $K'\subset S^3\setminus \operatorname{int}(\nu(L))$ obtained from $K$ by a sequence of slides over components of $L$ such that $K$ is isotopic to $K'$ in the surgered manifold $S^3(L,\phi)$ and has linking vector $V_{K'}=\mathbf{0}$. Equivalently, $\operatorname{lk}(K',L_i)=0$ for all $i$.
\end{lemma}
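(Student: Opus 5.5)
The plan has two ingredients: each slide is an isotopy in $S^3(L,\phi)$, and slides can be chosen so that the linking vector is driven to $\mathbf{0}$ using the solution vector $X_K=(x_1,\dots,x_n)^\intercal$.

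\emph{Slides are isotopies.} The push-off $\widetilde L_i=\phi_i(S^1\times\{y\})$ is the attaching curve of the $i$-th surgery solid torus $(B^2\times S^1)_i$. It therefore bounds a meridional disk $D_i$ in $S^3(L,\phi)$ whose interior lies in that solid torus, hence is disjoint from $S^3\setminus\operatorname{int}(\nu(L))$. Given a band $b$ from $K_t$ to a parallel copy of $\widetilde L_i$ (pushed slightly into the exterior, with interior of $b$ disjoint from $\nu(L)$ and from the rest of $K$), the band sum $K_t\#_b(\pm\widetilde L_i)$ is obtained from $K_t$ by sweeping the arc of $K_t$ across $b\cup D_i$. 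This is an isotopy in $S^3(L,\phi)$, and the other components of $K$ are untouched. The result again lies in $S^3\setminus\operatorname{int}(\nu(L))$, after a small push of the copy of $\widetilde L_i$ off $\partial\nu(L)$. Consequently any finite sequence of slides yields a link isotopic to $K$ in $Y$.

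\emph{Effect on the linking vector.} In $S^3$, a band sum adds homology classes in the complement of $L$. Hence a positive slide changes $V_K$ by the vector $(\operatorname{lk}(\widetilde L_i,L_j))_j$, which is the $i$-th column $M e_i$ of the linking matrix, since $\operatorname{lk}(\widetilde L_i,L_i)=p_i$ and $\operatorname{lk}(\widetilde L_i,L_j)=\operatorname{lk}(L_i,L_j)$ for $j\neq i$. A negative slide changes $V_K$ by $-Me_i$. So perform, for each $i$, $|x_i|$ slides of (say) the component $K_1$ over $L_i$, all negative if $x_i>0$ and positive if $x_i<0$. The resulting $K'$ has
\[
V_{K'}=V_K-\sum_i x_i M e_i=V_K-MX_K=\mathbf{0},
\]
so $\operatorname{lk}(K',L_i)=0$ for every $i$.

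\emph{Main obstacle.} The delicate step is the first one. The band and the parallel copy of $\widetilde L_i$ must avoid $\nu(L)$, the other components of $K$, and earlier copies; this can be arranged by general position in the $3$-dimensional exterior. The isotopy across $D_i$ must also be justified rigorously. When sliding several times over the same $L_i$, I will take successive parallel copies $\phi_i(S^1\times\{y_k\})$ with distinct $y_k$, slightly pushed outward. These all bound disjoint meridional disks in the $i$-th solid torus (parallel meridian disks after extending by a collar), so each slide is an independent isotopy.
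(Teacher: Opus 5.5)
Your proposal is correct and follows essentially the same argument as the paper. You choose negative or positive slides according to the sign of each $x_i$, observe that each slide adds or subtracts the $i$-th column of $M_{(L,\phi)}$, and conclude $V_{K'}=V_K-M_{(L,\phi)}X_K=\mathbf{0}$. Your additional justification, that each slide is an isotopy in $S^3(L,\phi)$ because $\widetilde L_i$ bounds a meridional disk of the glued-in solid torus, fills in a step the paper only asserts ("by construction").
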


\begin{proof}
Write $X_K=(x_1,\dots,x_n)^\intercal\in\mathbb{Z}^n$. For each $i$, we modify $K$ by sliding it over $L_i$ as follows: if $x_i>0$, perform $x_i$ negative slides of $K$ over $L_i$; if $x_i<0$, perform $|x_i|$ positive slides of $K$ over $L_i$.

Let $K'$ denote the link obtained after performing all such slides. By construction, $K'$ is isotopic to $K$ in the surgered manifold $S^3(L,\phi)$ and may be regarded as lying in $S^3\setminus \operatorname{int}(\nu(L))$. With our convention, a positive slide of $K$ over $L_i$ adds the $i$-th column of $M_{(L,\phi)}$ to the linking vector, while a negative slide subtracts it. Since $M_{(L,\phi)}X_K=V_K$, these slides change the linking vector by $-M_{(L,\phi)}X_K$, and hence $V_{K'}=V_K-M_{(L,\phi)}X_K=\mathbf{0}$.
\end{proof}

\begin{remark}
At each step of the construction, the choice of bands is not unique, and hence the resulting links need not be isotopic to $K$ in $S^3$. Nevertheless, they are all isotopic to $K$ in the surgered manifold $S^3(L,\phi)$.
\end{remark}

\begin{lemma}\label{lem: tubing}
Let $K'\subset S^3\setminus \operatorname{int}\nu(L)$ be an oriented link satisfying $\operatorname{lk}(K',L_i)=0$ for all $i$. If $F\subset S^3$ is a Seifert surface for $K'$, then there exists a Seifert surface $F'$ for $K'$ such that $F'\subset S^3\setminus\operatorname{int}(\nu(L))$.
\end{lemma}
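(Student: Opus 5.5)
Starting from the given Seifert surface $F\subset S^3$, we modify it by tubing so that it stays away from $L$. First isotope $F$ rel boundary into general position with respect to $L$. Since $K'\cap\nu(L)=\emptyset$, we can shrink $\nu(L)$ if needed so that $F$ meets each solid torus $\nu(L_i)$ in finitely many meridional disks $\phi_i(\{x_k\}\times B^2)$, $k=1,\dots,r_i$. Each disk corresponds to a transverse intersection point of $F$ with $L_i$, carrying a sign $\epsilon_k=\pm1$. The algebraic count of these signs equals $F\cdot L_i=\operatorname{lk}(K',L_i)=0$. So for each $i$ the intersection points of $F$ with $L_i$ split into an equal number of positive and negative points.

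The key step is to remove these intersections in cancelling pairs. Fix $i$ and order the intersection points cyclically along $L_i$. If all points are not already of one sign, which is impossible unless there are none, there are two consecutive points $p,q$ along $L_i$ with opposite signs. Let $\alpha\subset L_i$ be the arc from $p$ to $q$ whose interior misses $F$. Remove the two meridional disks around $p$ and $q$ from $F$. Then attach the annulus $\partial\nu(\alpha)\cap \nu(L_i)$, namely the tube $\phi_i(I_\alpha\times\partial B^2)$ running along $\alpha$ inside $\partial\nu(L_i)$. Because the signs are opposite, the orientations of the two boundary meridians induced from $F\setminus(\text{disks})$ are opposite as curves on the torus. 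The tube is therefore compatible with the orientation of $F$, so the new surface is oriented. Its boundary is still $K'$, and it meets $L_i$ in two fewer points.

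The tube lies in $\partial\nu(L_i)$. After a small push off $\nu(L)$ into the complement, it is disjoint from $\operatorname{int}\nu(L)$. We must also check that pushing it does not hit other pieces of $F$. Choose $\alpha$ with no intersection points of $F$ in its interior, and take the tube slightly outside $\nu(L_i)$. Then the only parts of $F$ near the tube are the two removed disks, so embeddedness is preserved.

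Iterate over all pairs and all components $L_i$. The count drops by two each time, so the process terminates with a surface $F''$ disjoint from $L$, and hence, after isotopy, disjoint from $\operatorname{int}\nu(L)$.

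Finally, we must ensure that $F''$ is a Seifert surface, meaning it has no closed components. Tubing cannot create a closed component, but one might already exist or arise by separation. Any component of $F''$ with empty boundary can simply be discarded, since the boundary stays $K'$. The main point needing care is the orientation compatibility in the tubing step. It reduces to the local sign computation that opposite-sign intersections give oppositely oriented meridian boundaries, and I expect this to be routine.
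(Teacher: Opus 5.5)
Your proposal is correct and follows essentially the same route as the paper. Both arguments make $F$ transverse to $L$, use $F\cdot L_i=\operatorname{lk}(K',L_i)=0$ to pair intersection points of opposite sign, and tube along subarcs of $L_i$ while staying out of $\operatorname{int}\nu(L)$. Your added care (taking consecutive opposite-sign points so the tube meets no other sheet of $F$, checking that opposite signs give oppositely oriented meridian circles, and discarding closed components) makes explicit details that the paper's proof leaves implicit.
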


\begin{proof}
Let $F\subset S^3$ be a Seifert surface for $K'$. After isotoping $F$ rel $\partial F=K'$, we may assume that $F$ is transverse to each component $L_i$. Since $\operatorname{lk}(K',L_i)=0$ for all $i$, the algebraic intersection number satisfies $F\cdot L_i = 0$. Hence the points of $F\cap L_i$ can be paired into oppositely signed pairs.

Fix $i$ and let $p_-,p_+\in F\cap L_i$ be a pair of intersection points with opposite signs. Remove small embedded disks from $F$ around $p_-$ and $p_+$. Let $\alpha\subset L_i$ be the subarc connecting $p_-$ and $p_+$. Attach a tube $S^1\times I$ to the punctured $F$ along $\alpha$, choosing the annulus so that its interior is disjoint from $\nu(L)$. This tubing eliminates the pair $p_-,p_+$ without changing $\partial F=K'$.

Performing this construction for all such pairs and all components $L_i$ yields a Seifert surface $F'$ for $K'$ satisfying $F'\subset S^3\setminus\operatorname{int}\nu(L)$.
\end{proof}

\begin{remark}
One may obtain a Seifert surface $F$ for $K'$ in $S^3$ using the classical Seifert algorithm \cite{seifert1935geschlecht}, or start with any chosen Seifert surface if one is already available. Moreover, each tube corresponds decreases the Euler characteristic by $2$. Thus $\chi(F')=\chi(F)-2(\#\text{ of tubes})$. In particular, if $F$ is connected, then $g(F')=g(F)+(\#\text{ of tubes})$.
\end{remark}

\section{Main theorem}\label{sec: main}
In this section, we present a Seifert algorithm for null-homologous links in $3$-manifolds, a linking number formula, and a method for constructing a surgery diagram for surgery on a null-homologous knot in a $3$-manifold. In the examples, we construct Seifert surfaces for knots in homology $3$-spheres and compute their Seifert matrices, signatures, and Alexander polynomials using the algorithm and the linking number formula. We construct surgery diagrams for surgeries on these knots, and show that they are not isotopic into a $3$-ball by applying elementary Kirby calculus on the surgery diagrams.

\Algorithm

\begin{proof}
Since $K$ is null-homologous in $Y=S^3(L,\phi)$, there exists a solution vector $X_K\in\mathbb Z^n$ satisfying $M_{(L,\phi)}X_K=V_K$. By \autoref{lem: slide of link}, using the vector $X_K$, the link $K$ can be explicitly isotoped in $Y$ to a link $ K'\subset S^3\setminus\operatorname{int}(\nu(L))$ with $\operatorname{lk}(K',L_i)=0$ for all $i$. Applying \autoref{lem: tubing} to $K'$, any Seifert surface $F$ for $K'$ in $S^3$ can be modified by tubing to obtain a Seifert surface
$F'\subset S^3\setminus \operatorname{int}\nu(L)$. Here, $F$ may be constructed using the classical Seifert algorithm in $S^3$.
\end{proof}

\linkingformula

\begin{proof}
Apply \autoref{thm: Algorithm} to $K_1$. This yields a knot $K_1'\subset S^3\setminus\operatorname{int}(\nu(L))$, obtained from $K_1$ by slides over $L$, and a Seifert surface $F_1'\subset S^3\setminus\operatorname{int}(\nu(L))\subset Y$ with $\partial F_1'=K_1'$. Since slides are realized by isotopies in $Y$,
\[
\operatorname{lk}_Y(K_1,K_2)=\operatorname{lk}_Y(K_1',K_2)=F_1'\cdot K_2=\operatorname{lk}_{S^3}(K_1',K_2),
\]
where the last equality holds since $F_1'\subset S^3\setminus\operatorname{int}(\nu(L))$.

Let $V_{K_i}\in\mathbb Z^n$ be the linking vectors and write $X_{K_1}=(x_1,\dots,x_n)^\intercal\in\mathbb{Z}^n$ for a solution vector. To obtain the knot $K_1'$, if $x_j<0$ (resp.\ $x_j>0$), we perform $|x_j|$ positive (resp.\ $x_j$ negative) slides over $L_j$. A positive (resp.\ negative) slide of $K_1$ over $L_j$ changes $\operatorname{lk}_{S^3}(K_1,K_2)$ by $+\operatorname{lk}_{S^3}(K_2,L_j)$ (resp.\ $-\operatorname{lk}_{S^3}(K_2,L_j)$); with our convention for the solution vector $X_{K_1}$, the total change is $-X_{K_1}^{\intercal}V_{K_2}$.

Hence,
\[
\operatorname{lk}_Y(K_1,K_2)=\operatorname{lk}_{S^3}(K_1',K_2)=\operatorname{lk}_{S^3}(K_1,K_2)-X_{K_1}^{\intercal}V_{K_2}.
\]
It remains to show that the right-hand side is independent of the choice of the solution $X_{K_1}$. If $X$ and $X'$ are two solutions, then $M_{(L,\phi)}(X-X')=\mathbf{0}$. Since $V_{K_2}\in\operatorname{im} (M_{(L,\phi)})$ and $M_{(L,\phi)}$ is symmetric, we have $(X-X')^{\intercal}V_{K_2}=0$.

If $Y$ is a homology $3$-sphere, then $M_L$ is unimodular and $X_{K_1}=M_{(L,\phi)}^{-1}V_{K_1}$, hence
\[
\operatorname{lk}_Y(K_1,K_2)=\operatorname{lk}_{S^3}(K_1,K_2)-
V_{K_1}^{\intercal} M_{(L,\phi)}^{-1} V_{K_2}.
\]
\end{proof}

\framingcoefficient

\begin{proof}
Let $\widetilde K$ be a push-off of $K$. By definition, $p=\operatorname{fr}_Y(K)=\operatorname{lk}_Y(K,\widetilde K)$. Applying \autoref{cor: linking formula} to the pair $(K,\widetilde K)$ gives $\operatorname{lk}_Y(K,\widetilde K) =\operatorname{lk}_{S^3}(K,\widetilde K) -X_K^{\intercal}V_{\widetilde K}$. Since $\widetilde K$ is a parallel push-off of $K$, we have $V_{\widetilde K}=V_K$. Thus the framing coefficient of $K$ in $S^3$ is $\operatorname{lk}_{S^3}(K,\widetilde K)=p+X_K^\intercal V_K$.
\end{proof}

\begin{example}\label{ex: example 1}
Let $K$ be an oriented knot in $Y=S^3(L_1^1\cup L_2^1\cup L_3^1)$ shown in \autoref{fig: example} (a). The linking matrix of $L=L_1^1\cup L_2^1\cup L_3^1$ is
\[
M_L=
\begin{pmatrix}
1&0&0\\
0&1&0\\
0&0&1
\end{pmatrix},
\]
and hence $Y$ is a homology $3$-sphere. The linking vector of $K$ with respect to $L$ is
\[
V_K=
\begin{pmatrix}
-1\\
0\\
0
\end{pmatrix}.
\]
It follows that the solution vector is
\[
X_K=M_L^{-1}V_K=
\begin{pmatrix}
-1\\
0\\
0
\end{pmatrix}.
\]
We apply the algorithm in \autoref{thm: Algorithm}. Performing a positive slide of $K$ over $L_1$ yields a knot $K'$ in \autoref{fig: example} (b) satisfying $V_{K'}=\mathbf{0}$. Let $F$ be a Seifert surface for $K'$ in $S^3$; in this case, $F$ is a disk. By tubing $F$ along a subarc of $L_2$ connecting two intersection points of opposite sign, we obtain a Seifert surface $F'\subset S^3\setminus\operatorname{int}(\nu(L))\subset Y$.

Let $\alpha,\beta\subset F'$ be generators of $H_1(F')$. Since $M_L$ is the identity matrix, the linking vectors and solution vectors coincide. The curves $\alpha$ and $\beta$ are meridians of $L_2$ and $L_3$, respectively.
In particular,
\[
V_\alpha=V_{\alpha^+}=X_\alpha=X_{\alpha^+}=
\begin{pmatrix}
0\\
1\\
0
\end{pmatrix},
\qquad
V_\beta=V_{\beta^+}=X_\beta=X_{\beta^+}=
\begin{pmatrix}
0\\
0\\
1
\end{pmatrix},
\]
where $\alpha^+$ (resp.\ $\beta^+$) denotes a push-off of $\alpha$
(resp.\ $\beta$) in the positive normal direction.
Moreover,
\[
\operatorname{lk}_{S^3}(\alpha,\alpha^+)=0,\quad
\operatorname{lk}_{S^3}(\alpha,\beta^+)=0,\quad
\operatorname{lk}_{S^3}(\beta,\alpha^+)=1,\quad
\operatorname{lk}_{S^3}(\beta,\beta^+)=0.
\]

Applying the linking number formula \autoref{cor: linking formula}, the Seifert matrix with respect to the ordered basis $(\alpha,\beta)$ is
\[
A=
\begin{pmatrix}
\operatorname{lk}_Y(\alpha,\alpha^+) &
\operatorname{lk}_Y(\alpha,\beta^+)\\
\operatorname{lk}_Y(\beta,\alpha^+) &
\operatorname{lk}_Y(\beta,\beta^+)
\end{pmatrix}
=
\begin{pmatrix}
\operatorname{lk}_{S^3}(\alpha,\alpha^+)-X_\alpha^\intercal V_{\alpha^+} &
\operatorname{lk}_{S^3}(\alpha,\beta^+)-X_\alpha^\intercal V_{\beta^+}\\
\operatorname{lk}_{S^3}(\beta,\alpha^+)-X_\beta^\intercal V_{\alpha^+} &
\operatorname{lk}_{S^3}(\beta,\beta^+)-X_\beta^\intercal V_{\beta^+}
\end{pmatrix}
=
\begin{pmatrix}
-1 & 0\\
1 & -1
\end{pmatrix}.
\]
Consequently, the signature is
\[
\sigma_Y(K)=\operatorname{sign}(A+A^T)
=\operatorname{sign}
\begin{pmatrix}
-2&1\\
1&-2
\end{pmatrix}
=-2
\]
and the Alexander polynomial (up to multiplication by $\pm t^k$) is
\[
\Delta_Y(K)\doteq\det(A-tA^\intercal)
=
\det
\begin{pmatrix}
-1+t & -t\\
1 & -1+t
\end{pmatrix}
=t^2-t+1.
\]
Thus $K$ does not bound a disk in $Y$ and the minimal genus of $K$ in $Y$ satisfies $g_Y(K)=1$.

We next show that $K$ is not a \textit{local knot} in $Y$, that is, $K$ is not isotopic to any knot contained in a $3$-ball in $Y$. Suppose for contradiction that $K$ is isotopic to a knot $J \subset B^3 \subset Y$. Then for any integer $p$,
\[
Y(K^p) \cong Y \# S^3(J^p).
\]

By \autoref{cor: Kirby}, a surgery diagram for $Y_{-1}(K)$ is obtained from the diagram in \autoref{fig: example} (a) by assigning to $K$ the integer 
\[
0=(-1)+1=p+X_K^\intercal V_K.
\]
A straightforward Kirby calculus shows that this diagram represents $S^3$, so
\[
S^3\cong Y(K^{-1})\cong Y\#S^3(J^{-1}).
\]
It follows that $Y\cong S^3$, which is impossible because $\pi_1(Y)$ is nontrivial. Hence $K$ is not a local knot.

We give an alternative argument that $Y \not\cong S^3$. Suppose that $Y \cong S^3$. Since $K$ is nontrivial in $Y$, the Property P theorem \cite{kronheimer2004witten} implies that $Y(K^{-1}) \ncong S^3$. This contradicts the fact that $Y(K^{-1}) \cong S^3$.
\end{example}

\begin{example}\label{ex: example 2}
Let $M$ be a contractible $4$-manifold represented by the Kirby diagram in \autoref{fig: example} (c). The boundary $Y=\partial M$ admits a surgery diagram obtained by replacing the dotted circle $L_1$ with a $0$-framed component. Let $K$ be an oriented knot in $Y = S^3(L_1^0 \cup L_2^0)$.

The linking matrix of $L=L_1^0\cup L_2^0$ is
\[
M_L=
\begin{pmatrix}
0 & 1\\
1 & 0
\end{pmatrix},
\]
and hence $Y$ is an integral homology $3$-sphere.
The linking vector of $K$ with respect to $L$ is
\[
V_K=
\begin{pmatrix}
0\\
-1
\end{pmatrix}.
\]
The solution vector is
\[
X_K=
M_L^{-1}V_K
=
\begin{pmatrix}
-1\\
0
\end{pmatrix}.
\]

Then performing a positive slide over $L_1$ yields a knot $K'$ in \autoref{fig: example} (d). Let $F'\subset Y$ be the genus one Seifert surface constructed using \autoref{thm: Algorithm}. For the generating curves $\alpha,\beta\subset F'$ of $H_1(F')$, we have
\[
V_\alpha=V_{\alpha^+}=V_\beta=V_{\beta^+}=
\begin{pmatrix}
0\\
1
\end{pmatrix},
\qquad
X_\alpha=X_{\alpha^+}=X_\beta=X_{\beta^+}=
\begin{pmatrix}
1\\
0
\end{pmatrix}.
\]
Moreover,
\[
\operatorname{lk}_{S^3}(\alpha,\alpha^+)=0,\quad
\operatorname{lk}_{S^3}(\alpha,\beta^+)=0,\quad
\operatorname{lk}_{S^3}(\beta,\alpha^+)=1,\quad
\operatorname{lk}_{S^3}(\beta,\beta^+)=0.
\]

Applying the linking number formula \autoref{cor: linking formula} in $Y$, the Seifert matrix with respect to the ordered basis $(\alpha,\beta)$ is
\[
A=
\begin{pmatrix}
\operatorname{lk}_Y(\alpha,\alpha^+) &
\operatorname{lk}_Y(\alpha,\beta^+)\\
\operatorname{lk}_Y(\beta,\alpha^+) &
\operatorname{lk}_Y(\beta,\beta^+)
\end{pmatrix}
=
\begin{pmatrix}
\operatorname{lk}_{S^3}(\alpha,\alpha^+)-X_\alpha^\intercal V_{\alpha^+} &
\operatorname{lk}_{S^3}(\alpha,\beta^+)-X_\alpha^\intercal V_{\beta^+}\\
\operatorname{lk}_{S^3}(\beta,\alpha^+)-X_\beta^\intercal V_{\alpha^+} &
\operatorname{lk}_{S^3}(\beta,\beta^+)-X_\beta^\intercal V_{\beta^+}
\end{pmatrix}
=
\begin{pmatrix}
-1 & 0\\
1 & -1
\end{pmatrix}.
\]
Consequently, the signature is
\[
\sigma_Y(K)=\operatorname{sign}(A+A^T)=-2,
\]
and the Alexander polynomial (up to multiplication by $\pm t^k$) is
\[
\Delta_Y(K)\doteq \det(A-tA^T)=t^2-t+1.
\]
Thus the minimal genera of $K$ in $Y$ and $M$ satisfy $g_Y(K)=1$ and $g_M(Y)=0$, respectively. Indeed, $K$ bounds an obvious embedded disk in $M$, obtained by pushing a trivial disk bounded by $K$ in $S^3$ into the interior of the $0$-handle $B^4\subset M$.

Next, we consider $0$-surgery on $K$ in $Y$. By \autoref{cor: Kirby}, $Y(K^0)\cong S^3(L_1^0\cup L_2^0\cup K^0)$. Here the framing coefficient of $K$ in $S^3$ is
\[
0=0+0=p+X_K^\intercal V_K.
\] A simple Kirby calculus computation shows that $Y(K^0)\cong S^1\times S^2$.

Suppose that $K$ were isotopic to a knot $J$ contained in a $3$-ball in $Y$. Then
\[
Y(K^0)\cong Y\#S^3(J^0).
\]
Since $Y(K^0)\cong S^1\times S^2$, this would imply that $Y \cong S^3$. However, $Y$ is a non-simply connected homology sphere, a contradiction. Thus, $K$ is not a local knot.

Alternatively, suppose that $Y \cong S^3$. Since $K$ is nontrivial in $Y$, Gabai's Property R theorem \cite{gabai1987foliations} implies that $Y(K^0) \not\cong S^1 \times S^2$. This contradicts the fact that $Y(K^0) \cong S^1 \times S^2$.
\end{example}

\begin{figure}[ht!]
\centering

\makebox[\textwidth][c]{
  \begin{minipage}[t]{0.28\textwidth}
  \centering
  \labellist
  \small\hair 2pt
  \pinlabel {$L_3$}  at 10 333
  \pinlabel {$L_2$}  at 30 200
  \pinlabel {$L_1$}  at 10 30
  \pinlabel {$1$}  at 250 30
  \pinlabel {$1$}  at 225 220
  \pinlabel {$1$}  at 250 330
  \pinlabel {$K$} at 120 33
  \pinlabel {\rotatebox{10}{\scalebox{0.8}{$\boldsymbol{>}$}}} at 200.5 22.5
  \pinlabel {\rotatebox{110}{\scalebox{0.8}{$\boldsymbol{>}$}}} at 124 10
  \pinlabel {\rotatebox{90}{\scalebox{0.8}{$\boldsymbol{>}$}}} at 218.7 200
  \pinlabel {\rotatebox{200}{\scalebox{0.8}{$\boldsymbol{>}$}}} at 231 271
  \endlabellist
  \includegraphics[width=\linewidth]{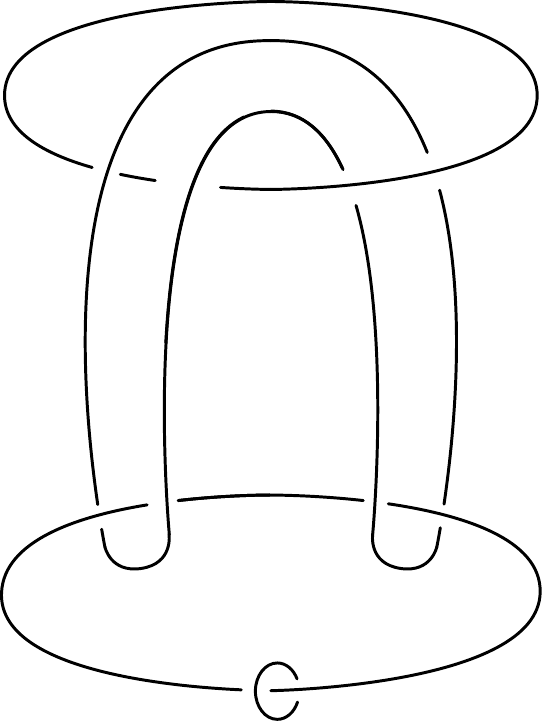}
  \par\smallskip{\small (a)}
  \end{minipage}
  \hspace{0.15\textwidth}
  
  \begin{minipage}[t]{0.28\textwidth}
  \centering
  \labellist
  \small\hair 2pt
  \pinlabel {$L_3$}  at 10 319
  \pinlabel {$L_2$}  at 30 186
  \pinlabel {$L_1$}  at 10 14
  \pinlabel {$1$}  at 250 14
  \pinlabel {$1$}  at 225 204
  \pinlabel {$1$}  at 250 314
  \pinlabel {$K'$} at 120 135
  \pinlabel {\textcolor{red}{$\alpha$}} at 125 297
  \pinlabel {\textcolor{blue}{$\beta$}} at 140 168
  \pinlabel {\textcolor{red}{\rotatebox{270}{\scalebox{0.8}{$\boldsymbol{>}$}}}} at 122 280
  \pinlabel {\textcolor{blue}{\rotatebox{180}{\scalebox{0.8}{$\boldsymbol{>}$}}}} at 120 161
  \pinlabel {\rotatebox{1}{\scalebox{0.8}{$\boldsymbol{>}$}}} at 150 145
  \pinlabel {\rotatebox{10}{\scalebox{0.8}{$\boldsymbol{>}$}}} at 200.5 8.5
  \pinlabel {\rotatebox{90}{\scalebox{0.8}{$\boldsymbol{>}$}}} at 218.7 186
  \pinlabel {\rotatebox{200}{\scalebox{0.8}{$\boldsymbol{>}$}}} at 231 257
  \endlabellist
  \raisebox{3.3mm}{
  \includegraphics[width=\linewidth]{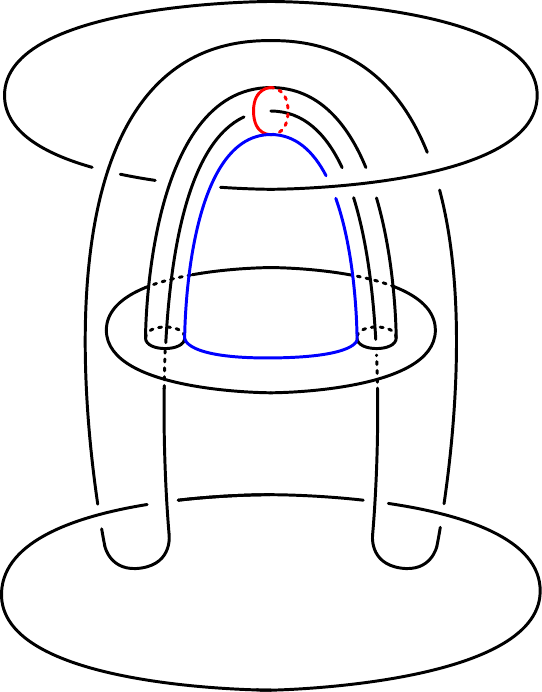}}
  \par\smallskip{\small (b)}
  \end{minipage}}
\bigskip
\medskip

\makebox[\textwidth][c]{
  \begin{minipage}[t]{0.28\textwidth}
  \centering
  \labellist
  \small\hair 2pt
  \pinlabel {$L_2$}  at 30 350
  \pinlabel {$L_1$}  at 5 110
  \pinlabel {$0$}  at 270 370
  \pinlabel {$K$} at 50 130
  \pinlabel {\scalebox{1.5}{$\bullet$}} at 100 96
  \pinlabel {\rotatebox{130}{\scalebox{0.8}{$\boldsymbol{>}$}}} at 21.5 140
  \pinlabel {\rotatebox{1}{\scalebox{0.8}{$\boldsymbol{>}$}}} at 130 96.5
  \pinlabel {\rotatebox{90}{\scalebox{0.8}{$\boldsymbol{>}$}}} at 239.5 160
  \endlabellist
  \includegraphics[width=\linewidth]{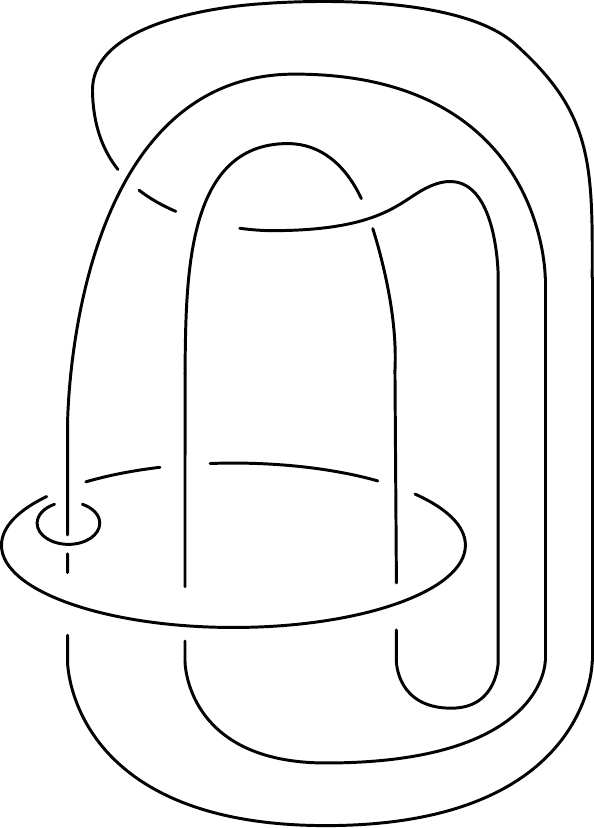}
  \par\smallskip{\small (c)}
  \end{minipage}
  \hspace{0.15\textwidth}
  
  \begin{minipage}[t]{0.28\textwidth}
  \centering
  \labellist
  \small\hair 2pt
  \pinlabel {$L_2$}  at 30 350
  \pinlabel {$L_1$}  at 5 110
  \pinlabel {$0$}  at 160 90
  \pinlabel {$0$}  at 270 370
  \pinlabel {$K'$} at 210 195
  \pinlabel {\textcolor{red}{$\alpha$}} at 133 348
  \pinlabel {\textcolor{blue}{$\beta$}} at 140 219
  \pinlabel {\textcolor{red}{\scalebox{0.8}{\rotatebox{270}{$\boldsymbol{>}$}}}} at 130 330
  \pinlabel {\textcolor{blue}{\rotatebox{180}{\scalebox{0.8}{$\boldsymbol{>}$}}}} at 120 211.5
  \pinlabel {\rotatebox{1}{\scalebox{0.8}{$\boldsymbol{>}$}}} at 150 194
  \pinlabel {\rotatebox{1}{\scalebox{0.8}{$\boldsymbol{>}$}}} at 130 96.5
  \pinlabel {\rotatebox{90}{\scalebox{0.8}{$\boldsymbol{>}$}}} at 239.5 160
  \endlabellist
  \includegraphics[width=\linewidth]{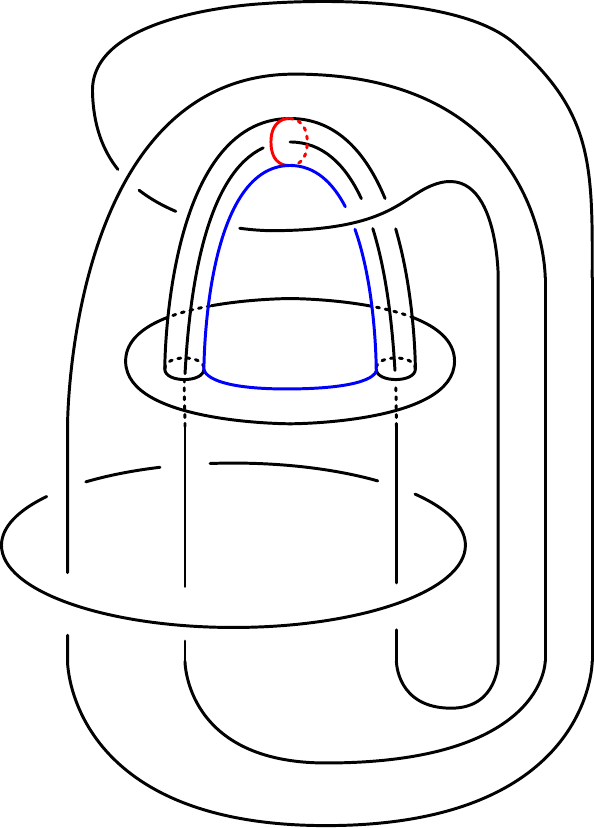}
  \par\smallskip{\small (d)}
  \end{minipage}}

\caption{}
\label{fig: example}
\end{figure}

\bibliographystyle{alpha}
\bibliography{refs}

@article{alegria2024seifert,
  title={A Seifert algorithm for integral homology spheres},
  author={Alegria, Linda V and Menasco, William W},
  journal={arXiv preprint arXiv:2405.14805},
  year={2024}
}

@article{seifert1935geschlecht,
  title={{\"U}ber das geschlecht von knoten},
  author={Seifert, Herbert},
  journal={Mathematische Annalen},
  volume={110},
  number={1},
  pages={571--592},
  year={1935},
  publisher={Springer}
}

@article{lickorish1962representation,
  title={A representation of orientable combinatorial 3-manifolds},
  author={Lickorish, WB Raymond},
  journal={Annals of Mathematics},
  pages={531--540},
  year={1962},
  publisher={JSTOR}
}

@article{wallace1960modifications,
  title={Modifications and cobounding manifolds},
  author={Wallace, Andrew H},
  journal={Canadian Journal of Mathematics},
  volume={12},
  pages={503--528},
  year={1960},
  publisher={Cambridge University Press}
}

@article{kirby1978calculus,
  title={A calculus for framed links in S3},
  author={Kirby, Robion},
  journal={Invent. math},
  volume={45},
  number={1},
  pages={35--56},
  year={1978}
}

@article{kronheimer2004witten,
  title={Witten’s conjecture and property P},
  author={Kronheimer, Peter B and Mrowka, Tomasz S},
  journal={Geometry \& Topology},
  volume={8},
  number={1},
  pages={295--310},
  year={2004},
  publisher={Mathematical Sciences Publishers}
}

@article{gabai1987foliations,
  title={Foliations and the topology of 3-manifolds. III},
  author={Gabai, David},
  journal={Journal of Differential Geometry},
  volume={26},
  number={3},
  pages={479--536},
  year={1987},
  publisher={Lehigh University}
}
\sloppy 

\end{document}